\newtheorem{theorem}{Theorem}[section]
\newtheorem{definition}[theorem]{Definition}
\newtheorem{cor}[theorem]{Corollary}
\newtheorem{lem}[theorem]{Lemma}
\numberwithin{equation}{section}
\newtheorem{example}{Example}
\begin{document}
\title{\vspace{-1cm} \bf Unique continuation for $\bar\partial$ with square-integrable potentials \rm}
\author{Yifei Pan\ \  and\ \   Yuan Zhang
}
\date{}

\maketitle

\begin{abstract}
  In this paper, we investigate the unique continuation property for the inequality $|\bar\partial u| \le V|u|$, where $u$ is a vector-valued function from a domain in  $\mathbb C^n$ to $\mathbb C^N$, and  the potential $V\in L^2$. We show that  the strong unique continuation property holds when $n=1$, and the weak unique continuation property holds  when $n\ge 2$. In both cases, the $L^2$ integrability condition on the potential is optimal.   

\end{abstract}

\renewcommand{\thefootnote}{\fnsymbol{footnote}}
\footnotetext{\hspace*{-7mm}
\begin{tabular}{@{}r@{}p{16.5cm}@{}}
& 2010 Mathematics Subject Classification. Primary 32W05; Secondary 35A23, 35A02.\\
& Key words and phrases. Cauchy-Riemann equation, unique continuation, Hardy-Littlewood-Sobolev inequality. 
\end{tabular}}

\section{Introduction}

 Let  $u=(u_1, \ldots, u_N)$ be a vector-valued function from a domain $\Omega\subset \mathbb C^n$ into $\mathbb C^N$. We say a differential equation or inequality 
 satisfies the strong unique continuation property, if every solution that vanishes to infinite order at a point $z_0\in \Omega$ vanishes identically. Here a square-integrable  function $u$ is said to vanish to infinite order (or flat) at  $z_0\in \Omega$  if for all $m\ge 0$,
\begin{equation}\label{flat}
    \lim_{r\rightarrow 0} r^{-m}\int_{|z-z_0|<r}|u(z)|^2 dv_z =0,
\end{equation} 
where $dv_z$  is the Lebesgue  measure element in $\mathbb C^n$ with respect to the dummy variable $z$. 
A differential equation or inequality is said to satisfy the weak unique continuation property, if every solution  that vanishes in an open subset vanishes identically.

While studying the boundary regularity and the 
uniqueness for CR-mappings of hypersurfaces  in \cite{BL90}, Bell and Lempert had proved and applied the strong  unique continuation property for $|\bar\partial u|\le C|u|$ with  $C$ a constant.  In this paper, we consider the unique continuation property for  the following general  type:
\begin{equation}\label{eqn}
    |\bar\partial u|\le V|u| \ \ \text{a.e.,}
\end{equation}
where the  potential $V$ is assumed to be locally square-integrable, i.e., $V\in L^2_{loc}(\Omega)$.

 Our first theorem below  concerns the strong unique continuation property in the case when  $n=1$. Denote by $H_{loc}^k(\Omega)$ the standard (local) Sobolev space of functions whose weak derivatives up to order $k$ are in $L_{loc} ^2(\Omega)$.

\begin{theorem}\label{main}
Let $\Omega$ be a  domain in $\mathbb C$. Suppose $u=(u_1, \ldots, u_N): \Omega\rightarrow \mathbb C^N$ with $u\in H^1_{loc}(\Omega)$  and satisfies $ |\bar\partial u|\le V|u|$
 for some $V\in L_{loc}^2(\Omega)$.  If $u$ vanishes to infinite order at $z_0\in \Omega$, then $u$ vanishes identically.
\end{theorem}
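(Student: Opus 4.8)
The plan is to turn the differential inequality into a genuine first-order linear system with an $L^2$ matrix coefficient, and then run a Cauchy--transform representation whose error term is controlled, precisely at the borderline exponent, by the Hardy--Littlewood--Sobolev inequality. A standard propagation argument reduces the theorem to the local statement that $u$ vanishes identically on a neighborhood of $z_0$: the set of flat points coincides with the interior of $\{u=0\}$, so it is open by the local statement and, being stable under limits, closed, and connectedness of $\Omega$ finishes. Normalize $z_0=0$ and work on a small disk $D_\rho=\{|z|<\rho\}$. On $\{u\neq 0\}$ set $A=\frac{(\bar\partial u)\,u^{*}}{|u|^{2}}$ (rank one, $u^{*}$ the conjugate transpose) and $A=0$ elsewhere. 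Since $u\in H^1_{loc}$, one has $\bar\partial u=0$ a.e. on $\{u=0\}$ by the Stampacchia property, so $\bar\partial u=Au$ holds a.e.; moreover $\|A\|=|\bar\partial u|/|u|\le V$, whence $A\in L^2_{loc}$. Thus the hypothesis becomes the equation $\bar\partial u=Au$ with $A\in L^2$.

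For the representation, the Cauchy--Pompeiu formula on $D_\rho$ gives $u=h+T(Au)$, where $h$ is holomorphic and $Tg(z)=-\tfrac1\pi\int_{D_\rho}\frac{g(\zeta)}{\zeta-z}\,dv_\zeta$. The kernel $1/(\zeta-z)$ is, up to a constant, the Riesz potential $I_1$ on $\mathbb C\cong\mathbb R^2$, so for every fixed $s>2$ the Hardy--Littlewood--Sobolev inequality yields a scale-invariant bound $\|T(Au)\|_{L^s(D)}\le C_s\,\|A\|_{L^2(D)}\,\|u\|_{L^s(D)}$ on any disk $D$; here $u\in L^s_{loc}$ for all $s<\infty$ by the two-dimensional Sobolev embedding $H^1\hookrightarrow L^s$. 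Since $A\in L^2$, the factor $\|A\|_{L^2(D_\rho)}\to 0$ as $\rho\to 0$, so I may shrink $\rho$ so that $C_s\|A\|_{L^2(D_\rho)}<\tfrac12$, making the relevant integral operator a contraction on $L^s(D_\rho)$.

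The heart is to show that the flatness of $u$ kills the holomorphic content. I would solve $\bar\partial F=-FA$ on $D_\rho$ through the Neumann series $F=\sum_{k\ge0}(-\mathcal T)^k I$, where $\mathcal T(w)=T(wA)$; the contraction bound makes this converge in $L^s$ with $F=I+(\text{$L^s$-small})$. Then $g:=Fu$ is holomorphic, since $\bar\partial(Fu)=(\bar\partial F)u+F(\bar\partial u)=-FAu+FAu=0$. Because $F\in L^2(D_\rho)$, Hölder's inequality transfers the flatness: $\int_{D_r}|g|\le\|F\|_{L^2(D_\rho)}\,\|u\|_{L^2(D_r)}=o(r^{m})$ for every $m$, and a holomorphic function that is flat at a point in $L^1$ vanishes identically, so $g=Fu\equiv 0$. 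Building the companion matrix $E$ with $\bar\partial E=AE$, $E=I+(\text{$L^s$-small})$, by the analogous series and checking $FE=I$ a.e. shows $F$ is invertible a.e., whence $u=E(Fu)\equiv 0$ on $D_\rho$.

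I expect the main obstacle to be exactly the borderline nature of the exponent $p=2$. The classical Bers--Vekua similarity principle produces a \emph{continuous}, hence pointwise invertible, transform only when $A\in L^p$ for some $p>2$; at $p=2$ the matrix $F$ lies merely in $L^s$ and need not be continuous, so the a.e. invertibility of $F$ --- equivalently, the identity $FE=I$, which at $p>2$ follows from pointwise ODE uniqueness but here cannot rely on any modulus of continuity --- is the delicate point and must be extracted from the HLS contraction estimate itself. Closing this gap at the endpoint, together with the fact that the $L^s\to L^s$ norm of $\mathcal T$ degenerates as $s\downarrow 2$, is precisely where the optimality of the $L^2$ hypothesis manifests, and is the step I would expect to require the most care.
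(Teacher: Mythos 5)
Your route is genuinely different from the paper's. The paper proves a weighted Hardy--Littlewood--Sobolev inequality $\|I_1f\|_{L^2_V(\mathbb C)}\le C_0\|V\|_{L^2(\mathbb C)}\|f\|_{L^2_{V^{-1}}(\mathbb C)}$ with the potential itself as the weight, applies the Cauchy--Green formula to $\psi_k\eta u$ (cut off both near $0$ and near $\partial D_{2r}$), and inserts the Carleman-type weight $|z|^{-2m}$ through the kernel identity $\frac1{z-\zeta}+\sum_{l=0}^{m-1}\frac{z^l}{\zeta^{l+1}}=\frac{z^m}{\zeta^m(z-\zeta)}$; the potential term is absorbed using the smallness of $\|V\|_{L^2}$ on small disks, the inner cut-off term dies by flatness, and letting $m\to\infty$ forces $u=0$ on $D_{r/2}$. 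You instead build an endpoint similarity principle: a matrix $F$ with $\bar\partial F=-FA$, so that $Fu$ is holomorphic, flatness transfers to $Fu$ by Cauchy--Schwarz, and a flat holomorphic function vanishes. Both arguments run on the same scale-invariant borderline estimate (yours is the unweighted HLS at a fixed $s>2$; the degeneration of $C_s$ as $s\downarrow2$ that worries you is harmless, since one fixed exponent, say $s=4$, suffices throughout).

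The one genuine gap is exactly the step you flagged and left open: the a.e.\ invertibility of $F$, equivalently $FE=I$. As written it is asserted, not proved, and without it the conclusion $u=E(Fu)\equiv0$ does not follow. It can, however, be closed without any continuity of $F$; holomorphy of $FE$ is enough. Extend $A$ by zero to all of $\mathbb C$ and run both Neumann series globally: $F=I+G$ with $G=\sum_{k\ge1}(-\mathcal T)^kI$, which converges in $L^s(\mathbb C)$ because $\mathcal T(I)=T(A)\in L^s(\mathbb C)$ (HLS, $A\in L^2$ with compact support) and $\|\mathcal T\|_{L^s\to L^s}\le C_s\|A\|_{L^2}<\frac12$; similarly $E=I+\tilde G$. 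The product rule (legitimate for $s\ge4$ after mollification, since $F,E\in W^{1,p}_{loc}$ with $\frac1p=\frac12+\frac1s$, the $\partial$-derivatives being Beurling transforms of $FA$ and $AE$) gives $\bar\partial(FE)=-FAE+FAE=0$, so $FE$ is entire, while $FE-I=G\tilde G+G+\tilde G\in L^{s/2}(\mathbb C)+L^{s}(\mathbb C)$. An entire function lying in $L^{q}(\mathbb C)+L^{q'}(\mathbb C)$ with $q,q'<\infty$ vanishes identically (mean value property over $D_R(z)$ plus H\"older, then $R\to\infty$), so $FE\equiv I$, hence $E(z)F(z)=I$ a.e.\ and $u=E(Fu)=0$ on $D_\rho$. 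Alternatively, purely locally: $FE$ is holomorphic on $D_\rho$ and quantitatively $L^{s/2}$-close to $I$ once $C_s\|A\|_{L^2(D_\rho)}$ is small, so $\det(FE)$ is holomorphic and not identically zero, hence nonvanishing off a discrete set, which already gives $F$ invertible a.e.\ --- all that is needed. With this step supplied, your proof is complete and correct.
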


This result is optimal in the sense that the  property fails if $V\notin L^2$, as demonstrated by Example \ref{ex} in Section 3.  Moreover, Mandache in \cite{Ma02} constructed  an example (see Example \ref{ex4}) with an $L^p$ potential, $0<p<2$,  where  even the  weak unique continuation property fails. 

Chanillo and Sawyer \cite{CS90} proved, among other results,  the following strong unique continuation property for the differential inequality  
\begin{equation}\label{eqn2}
    |\Delta u|\leq V|\nabla u|
\end{equation}
 with the potential $V\in L^2$. 

\begin{theorem}\label{cs}\cite{CS90}
Let $\Omega$ be a  domain in $\mathbb C$. Suppose $u=(u_1, \ldots, u_N): \Omega\rightarrow \mathbb R^N$ with $u\in H^2_{loc}(\Omega)$  and satisfies $    |\Delta u|\leq V|\nabla u|$
for some $V\in L_{loc}^2(\Omega)$. If $u$ vanishes to infinite order at $z_0\in \Omega$, then $u$ vanishes identically.
\end{theorem}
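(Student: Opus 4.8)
The plan is to deduce Theorem~\ref{cs} from Theorem~\ref{main}. Writing $z=x+iy$ and using $\partial=\tfrac12(\partial_x-i\partial_y)$, $\bar\partial=\tfrac12(\partial_x+i\partial_y)$, $\Delta=4\partial\bar\partial$, I set $f:=\partial u$. Since $u\in H^2_{loc}$ is real-valued, $f\in H^1_{loc}$, the reality gives $\bar\partial u=\overline{\partial u}=\bar f$, and in the weak sense $\bar\partial f=\bar\partial\partial u=\tfrac14\Delta u$. A direct computation with $u_x,u_y\in\mathbb R^N$ yields $|f|^2=\tfrac14(|u_x|^2+|u_y|^2)=\tfrac14|\nabla u|^2$, so $|\nabla u|=2|f|$. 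Combining these identities with the hypothesis $|\Delta u|\le V|\nabla u|$ gives
\begin{equation*}
|\bar\partial f|=\tfrac14|\Delta u|\le \tfrac14 V|\nabla u|=\tfrac12 V|f|\quad\text{a.e.,}
\end{equation*}
so $f$ satisfies the differential inequality of Theorem~\ref{main} with the locally square-integrable potential $V/2$.

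The crux is to show that the infinite-order vanishing of $u$ at $z_0$ is inherited by $f=\partial u$. I would establish this by a Caccioppoli-type interior estimate on dyadic annuli centered at $z_0$: testing $\Delta u=4\bar\partial f$ against $\phi^2u$ with a standard cutoff $\phi$ (supported in $B_{2r}$, equal to $1$ on $B_r$, $|\nabla\phi|\lesssim r^{-1}$) and integrating by parts yields
\begin{equation*}
\int \phi^2|\nabla u|^2\le C\,r^{-2}\int_{B_{2r}}|u|^2+\tfrac12\int\phi^2|\nabla u|^2+C\int_{B_{2r}}V^2|u|^2 .
\end{equation*}
After absorbing the gradient term, the potential term is controlled via H\"older's inequality together with the two-dimensional Sobolev embedding $H^1\hookrightarrow L^q$ for all $q<\infty$ and the smallness $\|V\|_{L^2(B_{2r})}\to 0$ as $r\to0$; iterating across the annuli transfers the decay factor $r^{-m}$ for every $m$. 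This shows $r^{-m}\int_{B_r}|f|^2\to0$ for all $m\ge0$, i.e.\ $f$ vanishes to infinite order at $z_0$.

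With both ingredients in hand, Theorem~\ref{main} applies to $f$ and forces $f\equiv0$ on $\Omega$. Hence $\partial u\equiv0$ and, by reality, $\bar\partial u=\bar f\equiv0$, so $\nabla u\equiv0$ and $u$ is constant on the connected set $\Omega$. Since $u\in H^2_{loc}$ embeds into $C^0$ in real dimension two and vanishes to infinite order at $z_0$, that constant must be $0$, giving $u\equiv0$. I expect the main obstacle to be the flatness-transfer step: the potential is $L^2$, which is exactly the critical exponent in real dimension two, so the Caccioppoli estimate sits at the borderline where the Sobolev embedding only just closes, and the iteration across annuli must be arranged carefully so that no order of vanishing is lost in passing from $u$ to $\partial u$.
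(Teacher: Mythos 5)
Your reduction is exactly the paper's own route: the paper obtains this theorem from Theorem~\ref{main} by applying it to $f=\partial u$, using $\Delta=4\bar\partial\partial$ and $|\partial u|=\tfrac12|\nabla u|$ for real-valued $u$, and your first paragraph reproduces that deduction correctly. You are also right that the one point this leaves implicit is that $f=\partial u$ must itself vanish to infinite order at $z_0$ before Theorem~\ref{main} can be invoked; the paper does not spell this out.

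However, your treatment of that crux step has a genuine gap. After Cauchy--Schwarz, your Caccioppoli inequality leaves the term $\int_{B_{2r}}V^2|u|^2$, which you propose to control ``via H\"older's inequality together with the two-dimensional Sobolev embedding $H^1\hookrightarrow L^q$ for all $q<\infty$.'' This cannot work as stated: $V$ is only assumed to be in $L^2$, so $V^2$ is only in $L^1$, and H\"older then forces the companion factor $|u|^2$ to be estimated in $L^\infty$ --- precisely the endpoint $q=\infty$ that the two-dimensional embedding misses; no finite $q$ helps, since $\|V^2\|_{L^p}=\|V\|_{L^{2p}}^2$ is in general infinite for every $p>1$. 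The repair must use the full hypothesis $u\in H^2_{loc}$, not merely $u\in H^1_{loc}$, and can be done in either of two ways. (i) Upgrade the $L^2$-flatness of $u$ to flatness of $\sup_{B_{2r}}|u|$ by the multiplicative Gagliardo--Nirenberg inequality on rescaled balls, namely $\|u\|_{L^\infty(B_{2r})}\le C\left(r^{-1}\|u\|_{L^2(B_{4r})}\right)^{1/2}\left(r^{-1}\|u\|_{L^2(B_{4r})}+\|\nabla u\|_{L^2(B_{4r})}+r\|D^2u\|_{L^2(B_{4r})}\right)^{1/2}$; then $\int_{B_{2r}}V^2|u|^2\le \|V\|_{L^2(B_{2r})}^2\,\sup_{B_{2r}}|u|^2$ decays faster than any power of $r$. (ii) Do not split by Cauchy--Schwarz at all: bound $\int\phi^2|u|V|\nabla u|\le \|\phi u\|_{L^a(B_{2r})}\|V\|_{L^2(B_{2r})}\|\nabla u\|_{L^c(B_{2r})}$ with $\tfrac1a+\tfrac12+\tfrac1c=1$ and $c>2$; here $\|\nabla u\|_{L^c(B_{2r})}$ is finite because $\nabla u\in H^1_{loc}\hookrightarrow L^c_{loc}$ (this is exactly where $H^2$ enters), and $\|\phi u\|_{L^a}$ inherits flatness by interpolating its flat $L^2$ norm against its bounded $H^1$ norm via Gagliardo--Nirenberg. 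With either repair, a single estimate at each scale $r$ suffices (no iteration over annuli is needed), and the remainder of your argument --- including the final observation that a flat constant must vanish --- is correct.
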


We note that the two inequalities \eqref{eqn}-\eqref{eqn2} can easily be converted to each other, at least in the smooth category. However, in terms of the unique continuation property, it turns out Theorem \ref{main} and Theorem \ref{cs} are no longer equivalent. In fact, one can obtain  Theorem \ref{cs} from Theorem \ref{main} by applying Theorem \ref{main} to $\partial u$, as a consequence of the facts that  $\Delta=4\bar\partial\partial$ and $|\partial u(z)|=\frac{1}{2}|\nabla u(x,y)|$ (since $u$ is real-valued in Theorem \ref{cs}). 
To deduce Theorem \ref{main} from Theorem  \ref{cs}, it would boil down to the possible existence of flat solutions to $\bar\partial$ given flat data.  Surprisingly,  there is an obstruction to such existence in the flat category. This phenomenon was discovered by  Liu and the two authors in  \cite{LPZ} which constructed  a smooth function $f$ that is flat at $0\in \mathbb C$, yet $\bar\partial u =f$ has no solutions that are flat at 0.

A well-known example of Wolff shows that  the strong unique continuation for \eqref{eqn2} no longer holds when the real dimension of the source domain is larger than $4$. More precisely,  Wolff constructed  in \cite{Wo94} a smooth real-valued function on $\mathbb R^d, d\ge 5$,  which vanishes to infinite order at the origin and satisfies \eqref{eqn2} with $V\in L^d(\mathbb R^d)$.

As an immediate application of Theorem \ref{main},   we obtain  the following weak unique continuation property for \eqref{eqn} with an $L^2$ potential when $n\ge 2$. Here given $u\in H^1_{loc}(\Omega)$, $\bar\partial u$ is understood as a $(0,1)$ form with distribution components.  

\begin{theorem}\label{main2}
Let $\Omega$ be a  domain in $\mathbb C^n$. Suppose $u=(u_1, \ldots, u_N): \Omega\rightarrow \mathbb C^N$ with $u\in H^1_{loc}(\Omega)$  and satisfies $ |\bar\partial u|\le V|u|$ 
 for some $V\in L_{loc}^2(\Omega)$.  If $u $ vanishes  in an open subset of $ \Omega$,   then $u$ vanishes identically.
\end{theorem}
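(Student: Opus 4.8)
The plan is to deduce the higher-dimensional weak unique continuation from the one-dimensional strong unique continuation of Theorem \ref{main} by restricting $u$ to complex lines and running a slicing-and-propagation argument. We may assume $\Omega$ is connected. Let $Z$ denote the interior of the zero set of $u$, i.e.\ the union of all open subsets of $\Omega$ on which $u=0$ a.e.; by hypothesis $Z$ is nonempty, and it is open by definition. Since $\Omega$ is connected, it suffices to prove that $Z$ is relatively closed in $\Omega$, for then $Z=\Omega$.

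The core is a one-variable slicing lemma. Fix a coordinate direction, say $z_1$, write $z=(z_1,z')$, and for fixed $z'$ consider the slice $\ell_{z'}=\{z_1:(z_1,z')\in\Omega\}$ and the restriction $z_1\mapsto u(z_1,z')$. Because $u\in H^1_{loc}(\Omega)$ and $V\in L^2_{loc}(\Omega)$, Fubini's theorem, together with the characterization of Sobolev functions by absolute continuity on almost every line, gives for almost every $z'$ that $u(\cdot,z')\in H^1_{loc}(\ell_{z'})$, that $V(\cdot,z')\in L^2_{loc}(\ell_{z'})$, and that the weak derivative $\partial_{\bar z_1}u(\cdot,z')$ agrees with the slice of $\partial u/\partial\bar z_1$. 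Since $|\partial u/\partial\bar z_1|\le|\bar\partial u|\le V|u|$ a.e., we obtain $|\partial_{\bar z_1}u(\cdot,z')|\le V(\cdot,z')\,|u(\cdot,z')|$ a.e.\ on $\ell_{z'}$ for almost every $z'$, so on each connected component of $\ell_{z'}$ the slice satisfies exactly the hypotheses of Theorem \ref{main}. The crucial observation is that if $u=0$ a.e.\ on an open set, then at every interior point $z_0$ of that set the integral condition \eqref{flat} holds trivially (all integrals vanish for small $r$); hence the slice vanishes to infinite order there, and Theorem \ref{main} forces $u(\cdot,z')\equiv0$ on the whole component containing that point. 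I emphasize that no pointwise continuity of the slice is invoked — this matters because $H^1$ in two real variables is the borderline case and does not embed into $C^0$ — as the integral form \eqref{flat} of infinite-order vanishing bypasses this issue.

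With the slicing lemma in hand, I would establish the following local propagation: if $u=0$ a.e.\ on a ball $B(q,s)$ contained in a polydisc $P=D_1\times\cdots\times D_n\subset\Omega$, then $u=0$ a.e.\ on all of $P$. One slices successively in the directions $z_1,\dots,z_n$. In the first step, for almost every $z'$ whose $z_1$-slice meets $B(q,s)$, the slice vanishes a.e.\ on the open disc $\ell_{z'}\cap B(q,s)$; since the $z_1$-slices of a polydisc are connected, the lemma yields vanishing on all of $D_1$, so that $u=0$ a.e.\ on $D_1\times B'$ for a small ball $B'$ in the remaining variables. Repeating in $z_2,\dots,z_n$ extends the vanishing to full range in one more coordinate at each step, and after $n$ steps $u=0$ a.e.\ on $P$. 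To conclude that $Z$ is closed, take $p\in\overline Z\cap\Omega$ and a polydisc $P\ni p$ with $\overline P\subset\Omega$; since $Z$ is open and meets $P$, it contains a ball $B(q,s)\subset P$, and local propagation gives $u=0$ a.e.\ on $P$, so $p\in Z$.

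I expect the main technical point to be the bookkeeping of the measure-zero set of ``bad'' lines arising at each slicing step. This is handled cleanly by never relying on any individual line: at every step one integrates over a positive-measure family of slices via Fubini, so the exceptional null sets are harmless, and each step again concludes that $u=0$ a.e.\ on an open set, to which Fubini applies afresh in the next direction. The only other subtlety, namely extracting infinite-order vanishing on a slice without continuity, is resolved by the integral definition \eqref{flat} as noted above.
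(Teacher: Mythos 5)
Your proposal is correct and takes essentially the same approach as the paper: slice $u$ and $V$ along coordinate complex lines, use Fubini to get $H^1_{loc}$/$L^2_{loc}$ regularity and the inequality on almost every slice, observe that vanishing a.e.\ on an open set gives infinite-order vanishing in the sense of \eqref{flat}, and then apply the one-dimensional Theorem \ref{main} to each slice before propagating. The paper's proof is simply a terser version of this (it reduces to $n=2$ and states the bidisc propagation claim directly), whereas you spell out the connectedness and successive-slicing bookkeeping in full.
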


In contrast to the classical strong unique continuation property results for Laplacian,  the integrability assumption on the potential in Theorem \ref{main2} is independent of the dimension $n $ of the source domain. On the other hand, the  weak unique continuation property fails for \eqref{eqn}  with $L^p$ potentials, $0<p<2$, as shown by a two-dimensional Mandache-type  example   (see Example \ref{ex2}  in Section 4). This shows the integrability assumption in Theorem \ref{main2} on the potential  is  optimal for the weak unique continuation property. 

Theorem \ref{main2} can be applied to obtain the uniqueness of solutions as follows. Denote by $\mathbb C_{(0,1)}^N$  the space of $N$-vectors  each of whose components is a $(0,1)$ form. 

\begin{cor}\label{main3}
 Let $\Omega$ be a  domain in $\mathbb C^n$, $h =(h_1, \ldots, h_N)^T: \Omega\rightarrow \mathbb C_{(0,1)}^N $, and  $A = (A_{jk})_{1\le j, k\le N}: \Omega \rightarrow \mathbb C_{(0,1)}^{N^2}$ with $A\in L^2_{loc}(\Omega)$. Suppose    $f, g:  \Omega\rightarrow \mathbb C^N$ with $f, g\in H^1_{loc}(\Omega)$ and are  solutions to  $ \bar\partial u =  A u +h$.  If $f = g$ in an open subset of $\Omega$,   then $f \equiv g$ on $\Omega$.
\end{cor}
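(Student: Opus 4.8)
The plan is to reduce Corollary \ref{main3} to the scalar-type inequality \eqref{eqn} by considering the difference $w := f - g$, and then apply Theorem \ref{main2}. Since $f$ and $g$ both solve the same first-order equation $\bar\partial u = Au + h$, subtracting gives $\bar\partial w = \bar\partial f - \bar\partial g = (Af + h) - (Ag + h) = A(f-g) = Aw$, where the inhomogeneous term $h$ cancels precisely because it does not depend on the unknown. This identity holds as an equation of $(0,1)$-forms with distribution components, and it is valid because $f, g \in H^1_{loc}(\Omega)$ so that $\bar\partial f$ and $\bar\partial g$ are well-defined $L^2_{loc}$ forms; the product $Aw$ makes sense in $L^1_{loc}$ since $A \in L^2_{loc}$ and $w \in H^1_{loc} \subset L^2_{loc}$, so by the Cauchy--Schwarz inequality $Aw \in L^1_{loc}$.

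The next step is to convert the matrix equation $\bar\partial w = Aw$ into the pointwise inequality $|\bar\partial w| \le V |w|$ required by Theorem \ref{main2}, with a scalar potential $V \in L^2_{loc}$. Writing out the components, for each $j$ we have $\bar\partial w_j = \sum_{k=1}^N A_{jk} w_k$, so by Cauchy--Schwarz the pointwise bound $|\bar\partial w_j| \le \left( \sum_{k=1}^N |A_{jk}|^2 \right)^{1/2} |w|$ holds almost everywhere. Summing the squares over $j$ and taking the square root, I obtain $|\bar\partial w| \le |A| \, |w|$ almost everywhere, where $|A| := \left( \sum_{j,k} |A_{jk}|^2 \right)^{1/2}$ denotes the Hilbert--Schmidt norm of $A$ (treating each $(0,1)$-form component by its coefficient). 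Setting $V := |A|$, the assumption $A \in L^2_{loc}(\Omega)$ guarantees exactly $V \in L^2_{loc}(\Omega)$, which is the integrability hypothesis needed.

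With $w \in H^1_{loc}(\Omega)$ satisfying $|\bar\partial w| \le V|w|$ for $V \in L^2_{loc}(\Omega)$, the hypotheses of Theorem \ref{main2} are met. Since $f = g$ on an open subset of $\Omega$, the difference $w$ vanishes identically on that open subset. Theorem \ref{main2} then yields $w \equiv 0$ on all of $\Omega$, i.e. $f \equiv g$, completing the proof.

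I expect the only genuine subtlety to be bookkeeping with the $(0,1)$-form-valued entries of $A$: one must fix a convention for the norm $|A|$ that is compatible with both the norm $|\bar\partial w|$ appearing in Theorem \ref{main2} and the contraction $Aw$, so that the Cauchy--Schwarz step is clean. A careful choice --- for instance, computing $|\bar\partial w|$ and $|A|$ componentwise in the $d\bar z_1, \dots, d\bar z_n$ basis and then summing --- makes the estimate $|\bar\partial w| \le |A|\,|w|$ transparent, and everything else is a direct invocation of the already-established Theorem \ref{main2}.
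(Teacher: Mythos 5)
Your proof is correct and follows essentially the same route as the paper: take $w = f-g$ so the inhomogeneous term $h$ cancels, set $V$ equal to the Hilbert--Schmidt norm $\bigl(\sum_{j,k}|A_{jk}|^2\bigr)^{1/2}$ of $A$, deduce $|\bar\partial w|\le V|w|$ by Cauchy--Schwarz, and invoke Theorem \ref{main2}. The paper's proof is identical in substance, merely terser about the Cauchy--Schwarz bookkeeping you spell out.
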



\section{Weighted Hardy-Littlewood-Sobolev inequality}

Recall that given $  f\in L^2(\mathbb C )$,  the Hardy-Littlewood Maximal function of $f$, denoted by $Mf$, is given by  $$ Mf(z): = \sup \frac{1}{|D|}\int_D|f(\zeta)|dv_{\zeta}, \ \  z\in \mathbb C ,  $$
where the supremum is taken over all disks $D$ in $\mathbb C $ containing $z$.  The Riesz fractional integral of $  f$ of order $\alpha\in (0, 2)$ is 
$$I_\alpha f(z): = \int_{\mathbb C } \frac{f(\zeta)}{|\zeta -z|^{2-\alpha}}dv_\zeta, \ \  z\in \mathbb C . $$  

\begin{definition}
Given a weight function $V(> 0)$ on a domain $\Omega\subset \mathbb C $, the weighted $L^2(\Omega)$ space with respect to $V$, denoted by $L^2_V(\Omega)$, is the collection of all functions $f$ on $\Omega$ such that 
$$ \|f\|_{L^2_V(\Omega)}: =\left(\int_{\Omega} |f(z)|^2V(z)dv_z\right)^\frac12 <\infty.  $$
\end{definition}
\medskip

A more general but rather technical variant  of the following theorem can be found in \cite{CW85}. We shall  prove the boundedness of 
$$I_{1}f= \int_{\mathbb C} \frac{f(\zeta)}{|\zeta -\cdot |}dv_\zeta $$  from $L^2_{V^{-1}}(\mathbb C)$ space to $L^2_{V}(\mathbb C)$ space with respect to a weight $V\in L^2(\mathbb C)$ through a much simpler approach.  
\medskip

\begin{theorem}\label{whl}
Let $V\in L^2(\mathbb C)$. Then there exists a universal constant $C_0$ such that for any $f\in L^2_{V^{-1}}(\mathbb C)$,  \begin{equation*}
    \|I_1f\|_{L_V^2(\mathbb C)}\le C_0 \|V\|_{L^2(\mathbb C)} \|f\|_{L_{V^{-1}}^2(\mathbb C)}.
\end{equation*}
\end{theorem}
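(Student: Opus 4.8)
The plan is to reduce the weighted bound to a single pointwise interpolation inequality of Hedberg type, after first trading the weighted data for unweighted Lebesgue integrability. Writing $g=fV^{-1/2}$, so that $\|g\|_{L^2(\mathbb C)}=\|f\|_{L^2_{V^{-1}}(\mathbb C)}$, Hölder's inequality with the splitting $f=(fV^{-1/2})\,V^{1/2}$ and the fact that $V^{1/2}\in L^4(\mathbb C)$ give the key embedding $f\in L^{4/3}(\mathbb C)$ with
\[
\|f\|_{L^{4/3}(\mathbb C)}\le \|f\|_{L^2_{V^{-1}}(\mathbb C)}\,\|V\|_{L^2(\mathbb C)}^{1/2}.
\]
This observation is the crux: it lets the entire argument run in the strong (rather than weak-type) regime.

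Next I would split $I_1 f(z)$ at a radius $R>0$ to be optimized. For the local part I would use the standard dyadic-annuli estimate $\int_{|\zeta-z|<R}\frac{|f(\zeta)|}{|\zeta-z|}\,dv_\zeta\le C R\,Mf(z)$, obtained by summing $\int_{|\zeta-z|<2^{-j}R}|f|\le \pi(2^{-j}R)^2 Mf(z)$ over $j\ge 0$. For the tail part, since the kernel $|\zeta-z|^{-1}$ lies in $L^4(\{|\zeta-z|\ge R\})$ with $\||\zeta-z|^{-1}\|_{L^4(|\zeta-z|\ge R)}=\pi^{1/4}R^{-1/2}$, Hölder against $f\in L^{4/3}(\mathbb C)$ yields $\int_{|\zeta-z|\ge R}\frac{|f(\zeta)|}{|\zeta-z|}\,dv_\zeta\le \pi^{1/4}\|f\|_{L^{4/3}(\mathbb C)}R^{-1/2}$. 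Minimizing $CR\,Mf(z)+C'\|f\|_{L^{4/3}(\mathbb C)}R^{-1/2}$ over $R$ (the balancing point being $R\sim(\|f\|_{L^{4/3}(\mathbb C)}/Mf(z))^{2/3}$) produces the pointwise bound
\[
|I_1 f(z)|\le C\,(Mf(z))^{1/3}\,\|f\|_{L^{4/3}(\mathbb C)}^{2/3}.
\]

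Finally I would square, multiply by $V$, and integrate: $\int_{\mathbb C} V|I_1 f|^2\,dv_z\le C\|f\|_{L^{4/3}(\mathbb C)}^{4/3}\int_{\mathbb C} V\,(Mf)^{2/3}\,dv_z$. Since $4/3>1$, the maximal operator is bounded on $L^{4/3}(\mathbb C)$, so $(Mf)^{2/3}\in L^2(\mathbb C)$ with $\|(Mf)^{2/3}\|_{L^2(\mathbb C)}=\|Mf\|_{L^{4/3}(\mathbb C)}^{2/3}\le C\|f\|_{L^{4/3}(\mathbb C)}^{2/3}$; Hölder against $V\in L^2(\mathbb C)$ then gives $\int_{\mathbb C} V(Mf)^{2/3}\,dv_z\le C\|V\|_{L^2(\mathbb C)}\|f\|_{L^{4/3}(\mathbb C)}^{2/3}$. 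Collecting powers yields $\int_{\mathbb C} V|I_1 f|^2\,dv_z\le C\|V\|_{L^2(\mathbb C)}\|f\|_{L^{4/3}(\mathbb C)}^2$, and inserting the embedding bound $\|f\|_{L^{4/3}(\mathbb C)}^2\le \|f\|_{L^2_{V^{-1}}(\mathbb C)}^2\|V\|_{L^2(\mathbb C)}$ closes the estimate with the asserted constant. The one point requiring care---and the reason a naive route stalls---is that $f$ need not be globally in $L^2$ or $L^1$; the main obstacle is to identify the correct unweighted exponent ($4/3$ in real dimension two) so that both the maximal-function bound on the local part and the final Hölder step operate in the strong range $p>1$, thereby avoiding weak-type $L^{1,\infty}$ losses that would break the $L^2$ bookkeeping.
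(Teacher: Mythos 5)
Your proof is correct, but it takes a genuinely different route from the paper's. You factor the weighted estimate through unweighted Lebesgue spaces: the chain $L^2_{V^{-1}}(\mathbb C)\hookrightarrow L^{4/3}(\mathbb C)$ (H\"older, costing $\|V\|_{L^2}^{1/2}$), then $I_1\colon L^{4/3}(\mathbb C)\to L^4(\mathbb C)$ (which is the Hardy--Littlewood--Sobolev embedding, and your Hedberg-type pointwise bound $|I_1f|\le C(Mf)^{1/3}\|f\|_{L^{4/3}}^{2/3}$ together with the $L^{4/3}$ maximal bound is exactly its standard proof), and finally $L^4(\mathbb C)\hookrightarrow L^2_V(\mathbb C)$ (H\"older again, another $\|V\|_{L^2}^{1/2}$). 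The paper instead never leaves the weighted setting: it factors the \emph{operator} rather than the mapping property, writing $I_1=I_{\frac12}\circ I_{\frac12}^*$, proving that $I_{\frac12}\colon L^2(\mathbb C)\to L^2_V(\mathbb C)$ by a radius splitting in which the splitting radius is chosen weight-adaptively as $\delta=V(z)^{-1}$ (plus a rescaling trick to remove an additive constant), and then invoking duality and the semigroup property of fractional integrals to conclude. Both arguments rest on the same basic technology (dyadic annuli, maximal function boundedness, optimizing a splitting radius), and your exponent bookkeeping checks out, yielding the correct power $\|V\|_{L^2}$ in the final bound. What each buys: your argument is shorter and more transparent---it exposes the theorem as a concatenation of two H\"older embeddings with the classical Sobolev inequality for $I_1$, with no duality needed---but it leans on the exact numerology of this case (dimension two, $V\in L^2$, order-one Riesz potential), since only then does a single intermediate unweighted exponent ($4/3\to 4$) interpolate perfectly between the two weighted spaces. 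The paper's weight-adapted radius choice and duality/semigroup factorization avoid any such intermediate space, which is the structure that persists in the more general weighted inequalities of Chanillo--Wheeden cited there, where a clean factorization through one unweighted $L^p$ is unavailable.
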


\begin{proof}
Throughout the proof we use $C$ to represent a universal constant, which may be different at different occurrences. We first show that  for any $g\in L^2(\mathbb C)$,  
\begin{equation}\label{half}
    \|I_{\frac{1}{2}}g(z)\|_{L^2_V(\mathbb C)}\le C \|V\|^{\frac{1}{2}}_{L^2(\mathbb C)}\|g\|_{L^2(\mathbb C)}.
\end{equation} 
Without loss of generality, we assume $g\ge 0$. 

For each $z\in \mathbb C$ with $\delta>0$ to be chosen later, write
$$ I_{\frac{1}{2}}g(z) = \int_{|\zeta-z|<\delta} + \int_{|\zeta-z|>\delta} \frac{g(\zeta)}{|\zeta -z|^{\frac{3}{2}}}dv_\zeta=: I +II. $$
By H\"older inequality, 
$$II \le \left(\int_{|\zeta-z|>\delta}|g(\zeta)|^2dv_\zeta\right)^{\frac{1}{2}}\left(\int_{|\zeta-z|>\delta}\frac{1}{|\zeta -z|^{3}}dv_\zeta\right)^{\frac{1}{2}}\le C\|g\|_{L^2(\mathbb C)} \left(\int_{\delta}^\infty \frac{1}{s^2}ds\right)^{\frac{1}{2}} = \frac{C}{\delta^{\frac{1}{2}}}\|g\|_{L^2(\mathbb C)}. $$
For $I$,
\begin{equation*}
\begin{split}
        I=&\sum_{k=1}^\infty \int_{\frac{\delta}{2^{k}}<|\zeta-z|<\frac{\delta}{2^{k-1}}}\frac{g(\zeta)}{|\zeta -z|^{\frac{3}{2}}}dv_\zeta\\
    \le &\sum_{k=1}^\infty \left(\frac{2^{k}}{\delta}\right)^{\frac{3}{2}} \int_{|\zeta-z|<\frac{\delta}{2^{k-1}}}g(\zeta)dv_\zeta\\
    = &\sum_{k=1}^\infty \left(\frac{2^{k}}{\delta}\right)^{\frac{3}{2}}\left|D_{\frac{\delta}{2^{k-1}}}\right|\left(\frac{1}{\left|D_{\frac{\delta}{2^{k-1}}}\right|} \int_{|\zeta-z|<\frac{\delta}{2^{k-1}}}g(\zeta)dv_\zeta\right)\\
    \le & \sum_{k=1}^\infty 2^{-\frac{k}{2}+2}\pi\delta^\frac{1}{2} Mg(z)\\
    =& C\delta^{\frac{1}{2}}Mg (z).
\end{split}
\end{equation*}
Thus we have
\begin{equation*}\label{ih}
I_{\frac{1}{2}}g(z)\le   C\left(\delta^{\frac{1}{2}}Mg(z)+ \delta^{-\frac{1}{2}}\|g\|_{L^2(\mathbb C)}\right).
    \end{equation*}
After choosing $\delta = V^{-1}(z)$ in the above, we further get
\begin{equation*}
I_{\frac{1}{2}}g(z)\le   C\left(V(z)^{-\frac{1}{2}}Mg(z)+ V(z)^{\frac{1}{2}}\|g\|_{L^2(\mathbb C)}\right).
    \end{equation*}
Square both sides of  the above inequality, multiply them by $V$, and then integrate over $\mathbb C$. One has
\begin{equation}\label{ih2}
    \begin{split}
       \left( \int_{\mathbb C} \left|I_{\frac{1}{2}}g(z)\right|^2V(z)dv_z\right)^{\frac{1}{2}}\le &C \left(\|Mg\|_{L^2(\mathbb C)} + \|V\|_{L^2(\mathbb C)}\|g\|_{L^2(\mathbb C)}\right)\\
        \le & C\left (1+\|V\|_{L^2(\mathbb C)}\right)\|g\|_{L^2(\mathbb C)}.
    \end{split}
\end{equation}
In the last inequality, we have used the boundedness of the Maximal function operator in $L^2(\mathbb C)$ space. 

We further employ the standard rescaling technique to get rid of the constant $1$ in the last line of \eqref{ih2}. Indeed, for any $k>0$, replacing $V$ by $k V$ in \eqref{ih2} and then dividing by $k^\frac{1}{2}$, we have
$$  \left(\int_{\mathbb C} \left|I_{\frac{1}{2}}g(z)\right|^2V(z)dv_z\right)^{\frac{1}{2}}\le C\left(\frac{1}{k^\frac{1}{2}}+k^\frac{1}{2}\|V\|_{L^2(\mathbb C)}\right)\|g\|_{L^2(\mathbb C)}. $$
Choosing $k = \frac{1}{\|V\|_{L^2(\mathbb C)}}$, this then completes the proof of (\ref{half}).

Therefore if $V$ is $L^2(\mathbb C)$, $I_{\frac{1}{2}}$ becomes a bounded operator from $L^2(\mathbb C)$ to $L_V^2(\mathbb C)$. Consequently, let  $I^*_{\frac{1}{2}}$ be the dual of $I_{\frac{1}{2}}$ with respect to the following inner product
$$\langle f, g\rangle: = \int_{\mathbb C} f(z)\overline{ g(z)}dv_z\ \ \ \text{for all}\ \ f, g\in L^2(\mathbb C).$$ Then $I^*_{\frac{1}{2}}$ is bounded from $L_{V^{-1}}^2(\mathbb C)$ to $L^2(\mathbb C)$ satisfying for any $f\in L^2_{V^{-1}}(\mathbb C)$,
\begin{equation*}
     \|I^*_{\frac{1}{2}}f(z)\|_{L^2(\mathbb C)}\le C \|V\|^{\frac{1}{2}}_{L^2(\mathbb C)}\|f\|_{L^2_{V^{-1}}(\mathbb C)}.
\end{equation*}Note that $I_\alpha$ is a convolution operator. So one has $I_{\frac{1}{2}}^* =I_{\frac{1}{2}}$.  Making use of the semi-group property of the fractional integrals, we obtain $I_1 = I_{\frac{1}{2}}\circ I_{\frac{1}{2}}^*$ is a bounded operator from $L_{V^{-1}}^2(\mathbb C)$ into $L_V^2(\mathbb C)$ with the desired estimate.


\end{proof}

\section{Proof of Theorem \ref{main}}

Throughout this section, we restrict on the case when $n=1$.  The key ingredient of Chanillo and Sawyer's proof in \cite{CS90} lies in a pointwise inequality in the spirit of a technical result of  Sawyer in \cite{Sa84}, together with a weighted inequality for the Riesz integral operator $I_1$. 
The proof of  our Theorem \ref{main} essentially follows their idea. Instead of using a Sawyer  inequality, we only need  the following trivial identity \eqref{sa} for the Cauchy kernel. 
\begin{equation}\label{sa}
     \frac{1}{z- \zeta} +\sum_{l=0}^{m-1}\frac{z^{l}}{\zeta^{l+1}} = \frac{z^m}{\zeta^m(z-\zeta)},\ \ \text{for all}\ \ \zeta\ne  z\ \text{nor}\ \  0.
\end{equation}
As usual, denote by $D_R$ the disk in $\mathbb C$ centered at $0$ with radius $R$.
\medskip

\begin{lem}\label{prep}
Let $u\in H^1(\mathbb C)$ has  compact support. Then for almost every $z\in \mathbb C$, 
\begin{equation}\label{cg}
    u(z) = \frac{1}{\pi}\int_{\mathbb C}\frac{\bar\partial u(\zeta)}{z -\zeta }dv_{\zeta}.
\end{equation}
If in addition that $u$ vanishes near $0$, then for any $l\ge 0$, \begin{equation}\label{ll}
    \int_{\mathbb C} \frac{ \bar\partial u(\zeta)}{\zeta^{l+1}}dv_\zeta =0.
\end{equation} \end{lem}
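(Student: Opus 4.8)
The statement splits into the Cauchy--Pompeiu representation \eqref{cg} and the moment vanishing \eqref{ll}, and I would treat the two separately. For \eqref{cg}, the plan is to recognize $\frac{1}{\pi z}$ as the fundamental solution of $\bar\partial$ on $\mathbb{C}$, so that \eqref{cg} is precisely the Cauchy--Green (Cauchy--Pompeiu) formula. First I would establish it for $u\in C_c^\infty(\mathbb{C})$ by applying the complex Green identity $\int_\Omega \bar\partial h\, dv = \frac{1}{2i}\oint_{\partial\Omega} h\, d\zeta$ on the punctured domain $\{\,|\zeta-z|>\varepsilon\,\}$ to $h=u(\zeta)/(z-\zeta)$: since $1/(z-\zeta)$ is holomorphic in $\zeta$ away from $\zeta=z$, the interior term reduces to $\int \bar\partial u/(z-\zeta)\,dv_\zeta$, the outer boundary contributes nothing by compact support, and the circle integral $\frac{1}{2i}\oint_{\partial D_\varepsilon(z)} u(\zeta)/(z-\zeta)\,d\zeta \to \pi u(z)$ as $\varepsilon\to 0$, yielding \eqref{cg}. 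Then I would pass to general $u\in H^1(\mathbb{C})$ of compact support by density: take $u_j\in C_c^\infty$ with $u_j\to u$ in $H^1$ and supports in a fixed compact set. The left sides converge in $L^2$, while the right side is the convolution of $\bar\partial u_j$ with the kernel $\frac{1}{\pi z}$, which is locally integrable on $\mathbb{C}=\mathbb{R}^2$ (indeed $\int_{|z|<1}|z|^{-1}\,dv_z<\infty$); hence on the fixed compact set this convolution is continuous from $L^2$ into $L^2_{loc}$ by Young's inequality, so the right sides converge as well, giving \eqref{cg} for a.e.\ $z$.

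For \eqref{ll}, the key observation is that $1/\zeta^{l+1}$ is holomorphic, hence $\bar\partial$-closed, away from $0$, while $u$ vanishes in a neighborhood of $0$ and has compact support. I would exploit this through a cutoff rather than tracking boundary terms. Pick $\chi\in C_c^\infty(\mathbb{C}\setminus\{0\})$ with $\chi\equiv 1$ on $\mathrm{supp}\,u$ (possible since $\mathrm{supp}\,u$ is compact and avoids a neighborhood of $0$). Then $\chi/\zeta^{l+1}\in C_c^\infty(\mathbb{C})$ is an admissible test function, and the definition of the weak derivative $\bar\partial u$ gives
\[
\int_{\mathbb{C}} \frac{\bar\partial u(\zeta)}{\zeta^{l+1}}\, dv_\zeta = \int_{\mathbb{C}} \bar\partial u\,\frac{\chi}{\zeta^{l+1}}\, dv_\zeta = -\int_{\mathbb{C}} u\, \bar\partial\!\left(\frac{\chi}{\zeta^{l+1}}\right) dv_\zeta = -\int_{\mathbb{C}} u\,\frac{\bar\partial\chi}{\zeta^{l+1}}\, dv_\zeta,
\]
where the third equality uses $\bar\partial(1/\zeta^{l+1})=0$ on $\mathrm{supp}\,\chi$. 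Since $\bar\partial\chi$ vanishes on $\mathrm{supp}\,u$ (where $\chi\equiv 1$), the final integrand is zero a.e., proving \eqref{ll}.

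I expect the only real care to be needed in the density argument for \eqref{cg}: one must verify that the weakly singular Cauchy transform genuinely passes to the limit, which rests on the local integrability of $1/|z|$ in the plane --- the feature that makes $n=1$ special. The integration by parts for \eqref{ll} is essentially a one-line distributional computation once the cutoff is in place, so I do not anticipate difficulties there.
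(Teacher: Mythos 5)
Your proof is correct, and it splits into one part that mirrors the paper and one part that genuinely differs. For \eqref{cg} you follow essentially the paper's route: Cauchy--Pompeiu for smooth compactly supported functions, then a density argument; the paper closes the limit by citing $L^2$-boundedness of the solid Cauchy integral operator on $D_R$, while you use Young's inequality with the locally integrable kernel $1/|z|$ on a fixed compact set --- the same fact in different packaging. Where you diverge is \eqref{ll}. The paper deduces it \emph{from} \eqref{cg}: since $u$ vanishes near $0$, the transform $\int_{\mathbb C}\bar\partial u(\zeta)/(z-\zeta)\,dv_\zeta$ equals $\pi u(z)$ a.e.\ and hence vanishes near $0$; because $\bar\partial u$ is supported away from $0$ this transform is holomorphic in $z$ near $0$, so all its $z$-derivatives at $0$ vanish, and these derivatives are exactly (up to the factor $-l!$) the moments in \eqref{ll}. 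Your argument never touches \eqref{cg}: you test the weak derivative directly against the smooth compactly supported function $\chi/\zeta^{l+1}$ and use holomorphy of $1/\zeta^{l+1}$ away from the origin. This is more elementary and self-contained --- it needs only the definition of the weak derivative --- whereas the paper's route reuses the representation formula it has just established; both are perfectly sound. Two small points of care in your version: you should arrange $\chi\equiv 1$ on a \emph{neighborhood} of $\mathrm{supp}\,u$ (the standard cutoff construction gives this), so that $\bar\partial\chi$ vanishes a.e.\ on $\mathrm{supp}\,u$ and the final integrand $u\,\bar\partial\chi/\zeta^{l+1}$ is genuinely zero; and the very first equality, inserting $\chi$ under the integral, implicitly uses that $\bar\partial u$ is supported in $\mathrm{supp}\,u$, which is worth saying explicitly.
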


\begin{proof}
Let $R_0>0$ be such that $\text{supp} \ u \subset D_{R_0}$.  For each $z\in \mathbb C$, pick   $R>R_0$ so that $z\in D_R$. If $u\in C^1(\mathbb C)$, by the Cauchy-Green formula (see for instance \cite{Ve}),  
$$u(z) = \frac{1}{2\pi i}\int_{\partial D_R}\frac{u(\zeta)}{\zeta- z}d\zeta -\frac{1}{\pi}\int_{D_R} \frac{\bar\partial u(\zeta)}{\zeta- z}dv_\zeta = \frac{1}{\pi}\int_{\mathbb C} \frac{\bar\partial u(\zeta)}{z-\zeta}dv_\zeta.$$
Here we used the fact that $\text{supp} \ u \subset D_{R_0}$.  For general $u$ in $H_0^1(D_{R_0})$, letting $u_j\in C_c^1(D_{R_0})\rightarrow u$ in $H^1(D_{R_0})$ norm, then for any $R> R_0$, \begin{equation*}
    \begin{split}
        \left \|u - \frac{1}{\pi}\int_{\mathbb C} \frac{\bar\partial u(\zeta)}{\cdot-\zeta}dv_\zeta\right\|_{L^2(D_{R})}\le& \|u-u_j\|_{L^2(D_{R})} + \frac{1}{\pi}\left \| \int_{D_{R}} \frac{\bar\partial (u(\zeta)-u_j(\zeta))}{\cdot-\zeta}dv_\zeta\right\|_{L^2(D_{R})}\\
        \le &C\|u-u_j\|_{H^1(D_{R})} \\
        =  & C\|u-u_j\|_{H^1(D_{R_0})}\rightarrow 0 
    \end{split}
\end{equation*}
as $j\rightarrow \infty$. Here the validity of the  second inequality was based on the boundedness of the solid Cauchy integral operator 
$$ Tf: = -\frac{1}{\pi }\int_{D_{R}} \frac{f(\zeta)}{z -\zeta }dv_{\zeta}$$
from $L^2(D_R)$ space to itself. This proves \eqref{cg}. 

When $u$ vanishes near  0, the integral \eqref{ll} is well defined. Moreover, $\int_{\mathbb C}\frac{\bar\partial u(\zeta)}{z -\zeta }dv_{\zeta}$ is zero near 0 by \eqref{cg}. Thus
$$  \int_{\mathbb C}\frac{ \bar\partial u(\zeta)}{\zeta^{l+1}}dv_\zeta =-\frac{1}{l!}\left.\frac{\partial^l}{\partial z^l} \int_{\mathbb C} \frac{\bar\partial u(\zeta)}{z-\zeta}dv_\zeta\right|_{z=0} =0. $$

\end{proof}

\begin{proof}[Proof of Theorem \ref{main}: ] 
Without loss of generality, we assume $z_0=0$.  Let $\chi_B$ be the characteristic function for a set $B$.  Fix an $r>0$  small enough such that $D_{2r}\subset \Omega$, $\|u\|_{H^1(D_{2r})}$ is finite, and 
\begin{equation*}
    \left\|V\chi_{D_{r}}+\frac{r}{1+|z|^2}\right\|^2_{L^2(\mathbb C)}\le \frac{\pi^2}{2C_0}, 
\end{equation*} where $C_0$ is the universal constant in Theorem \ref{whl}. Replacing $V$ by $V\chi_{D_r}+\frac{r}{1+|z|^2}$, still denoted by $V$, we have  $\eqref{eqn}$  holds on $D_{2r}$ with  $V\in L^2(\mathbb C)$,  
    \begin{equation}\label{vb}
    V>0 \ \ \text{on}\ \ \mathbb C;   \ \  V\ge C_r \ \  \text{on}\ \ D_{2r}
    \end{equation} 
    for  some positive constant $C_r$ dependent only on $r$, and
    \begin{equation}\label{vs}
    \|V\|^2_{L^2(\mathbb C)}\le \frac{\pi^2}{2C_0}.
    \end{equation}
    We shall show that $u = 0$ on $D_\frac{r}{2}$.  Then the rest of the proof for the strong unique continuation  property  follows from a standard propagation argument. 
    
    Choose $\eta\in C_c^\infty(\mathbb C)$ such that  $\eta =1$ on $D_r$;  $0\le \eta\le 1$ and $|\nabla \eta|\le \frac{2}{r}$ on $D_{2r}\setminus  D_r$; $\eta =0$  outside $D_{2r}$. Let $\psi\in  C^\infty(\mathbb C)$ be  such that $\psi =0$ in $D_1$; $  0\le \psi\le 1$ and $|\nabla \psi|\le 2$ on $D_{2}\setminus {D_1}$; $\psi =1$ outside $D_2$.  For each $k\ge \frac{4}{r}$ (then $ \frac{2}{k}\le\frac{r}{2}$),  let $\psi_k(z) = \psi(kz), z\in\mathbb C$. Note that  $\psi_k \eta u \in H^1(\mathbb C)$ and is supported inside $D_{2r}\setminus D_{\frac{1}{k}}$.  We apply Lemma \ref{prep} to $\psi_k \eta u $ and obtain  for $z\in D_r$,  $m\in \mathbb Z^+,$
\begin{equation*}
    \begin{split}
       |\psi_k(z)|^2 |u(z)|^2 =&\frac{1}{\pi^2} \left |\int_{\mathbb C}\frac{\bar\partial (\psi_k(\zeta)\eta(\zeta)u(\zeta))}{z -\zeta }dv_\zeta \right |^2\\
       =&\frac{1}{\pi^2} \left |\int_{\mathbb C}\left(\frac{1}{z -\zeta } +\sum_{l=0}^{m-1}\frac{z^{l}}{\zeta^{l+1}}\right)\bar\partial \left(\psi_k(\zeta)\eta(\zeta)u(\zeta)\right)dv_{\zeta} \right |^2.
    \end{split}
\end{equation*}
Making use of \eqref{sa}, 
 we  have 
\begin{equation*}
    \begin{split}
        &\int_{D_r}\frac{|\psi_k(z)|^2 |u(z)|^2}{|z|^{2m}}V(z)dv_z\\ =&\frac{1}{\pi^2} \int_{D_r}\frac{1 }{|z|^{2m}}\left |\int_{\mathbb C}\left(\frac{1}{z -\zeta } +\sum_{l=0}^{m-1}\frac{z^{l}}{\zeta^{l+1}}\right)\bar\partial \left(\psi_k(\zeta)\eta(\zeta)u(\zeta)\right)dv_{\zeta} \right |^2V(z)dv_z.\\         \le &\frac{1}{\pi^2} \int_{D_r}\left (\int_{\mathbb C} \frac{1}{|z-\zeta|} \frac{|\bar\partial \left(\psi_k(\zeta)\eta(\zeta)u(\zeta)\right)|}{|\zeta|^m}dv_{\zeta} \right )^2V(z)dv_z\\
        \le & \frac{1}{\pi^2}\left\|I_1\left(\frac{|\bar\partial \left(\psi_k\eta u\right)|}{|\cdot|^{m}} \right)\right\|^2_{L^2_V(\mathbb C)}. 
    \end{split}
\end{equation*}
Applying Theorem \ref{whl} and \eqref{vs}, we further obtain
\begin{equation}\label{mm}
    \begin{split}
       &\int_{D_r}\frac{|\psi_k(z)|^2 |u(z)|^2}{|z|^{2m}}V(z)dv_z\\
        \le &\frac{C_0}{\pi^2}\|V\|^2_{L^2(\mathbb C)}\int_{\mathbb C} \frac{|\bar\partial \left(\psi_k(z)\eta(z)u(z)\right)|^2}{|z|^{2m}} \frac{dv_{z}}{V(z)} \\
        \le &\frac{1}{2}\left(\int_{\mathbb C} \frac{|\bar\partial \psi_k(z)|^2|u(z)|^2}{|z|^{2m}V(z)} dv_{z} + \int_{D_r} \frac{|\psi_k(z)|^2|\bar\partial u(z) |^2}{|z|^{2m}V(z)} dv_{z} +\int_{D_{2r}\setminus D_r} \frac{|\bar\partial \left(\eta(z)u(z)\right) |^2}{|z|^{2m}V(z)} dv_{z}\right)\\
        =&: \frac{A}{2}+\frac{B}{2}+\frac{C}{2}.
    \end{split}
\end{equation}

We first have $\lim_{k\rightarrow \infty} A =0.$ Indeed, since $\bar\partial\psi_k$ is only supported on $D_{\frac{2}{k}}\setminus D_{\frac{1}{k}}$, and   $V>C_r$ on $D_{\frac{2}{k}}(\subset D_r)$,
$$A \le \int_{\frac{1}{k}<|z|<\frac{2}{k}} \frac{|\nabla \psi_k(z)|^2|u(z)|^2}{|z|^{2m}V(z)} dv_{z}\le \frac{k^{2m+2}}{C_r} \int_{|z|<\frac{2}{k}}|u(z)|^2dv_z\rightarrow 0$$
 as $k \rightarrow \infty$, as a consequence of the fact that  $u$ vanishes to infinite order at $0$.

On the other hand, applying the inequality \eqref{eqn} to $B$, we get 
\begin{equation*}
    \frac{B}{2}\le \frac{1}{2}\int_{D_r} \frac{|\psi_k(z)|^2| u(z) |^2}{|z|^{2m}}V(z) dv_{z}.
\end{equation*}
Subtract $\frac{B}{2}$ from both sides of \eqref{mm} and let $k\rightarrow \infty$. Then for each $m\in \mathbb Z^+$,  by Fatou's Lemma, 
 $$\int_{D_r}\frac{ |u(z)|^2}{|z|^{2m}}V(z)dv_z\le\lim_{k\rightarrow \infty}\int_{D_r}\frac{|\psi_k(z)|^2 |u(z)|^2}{|z|^{2m}}V(z)dv_z\le \int_{D_{2r}\setminus D_r} \frac{|\bar\partial \left(\eta(z)u(z)\right) |^2}{|z|^{2m}V(z)} dv_{z}.   $$
 
 Now multiplying both sides of the above inequalities by $r^{2m}$, then
 \begin{equation}\label{nn}
    \int_{D_r}\frac{r^{2m} }{|z|^{2m}}|u(z)|^2V(z)dv_z\le \int_{D_{2r}\setminus D_r} \frac{r^{2m}|\bar\partial \left(\eta(z)u(z)\right) |^2}{|z|^{2m}V(z)} dv_{z}\le \int_{D_{2r}\setminus D_r} \frac{|\bar\partial \left(\eta(z)u(z)\right) |^2}{V(z)} dv_{z}. 
 \end{equation}
 Shrinking the integral domain of the left hand side of \eqref{nn} to $D_{\frac{r}{2}}$, and making use of \eqref{vb}, 
  we further infer
 $$  2^{2m}\int_{D_\frac{r}{2}}|u(z)|^2V(z)dv_z\le \frac{1}{C_r} \int_{D_{2r}}  |\bar\partial \left(\eta(z)u(z)\right) |^2 dv_{z} =\tilde C_r\|u\|^2_{H^1(D_{2r})}, $$
for some  constant $\tilde C_r$ dependent only on $r$. Lastly, letting $m\rightarrow \infty$, we have $u= 0$  on $D_{\frac{r}{2}}$. 

\end{proof}
\medskip

It is worth pointing out that when $V\in L^p, p>2$, the weighted Hardy-Littlewood-Sobolev inequality is not needed to obtain the  unique continuation property for \eqref{eqn}. Indeed,  the strong unique continuation property follows by repeating  the same proof as in Theorem \ref{main}, except with those integrals over the domain $\mathbb C$ substituted by those over $ D_{2r}, r<\frac{1}{2}$, and with Theorem \ref{whl} replaced by the following inequality
\begin{equation}\label{pl}
     \|I_1f\|_{L_V^2(D_1)}\le C_p \|V\|_{L^p(D_1)} \|f\|_{L_{V^{-1}}^2(D_1)} \ \ \text{for all}\ \ f\in L_{V^{-1}}^2(D_1)
\end{equation}
 for a   constant $C_p$ dependent only on $p$.  \eqref{pl} can be  proved by simply using H\"older inequality. This is because when $p>2$, $\|\frac{1}{\cdot -z}\|_{L^q(D_1)}$ is uniformly bounded  on $D_1$ by some constant $C_p$, where  $q$ is the dual of $p$, i.e., $\frac{1}{p}+\frac{1}{q} =1$. Therefore,
 \begin{equation*}
     \begin{split}
     \|I_1f\|^2_{L_V^2(D_1)} \le  & \int_{D_1}\left|\int_{D_1}\frac{|f(\zeta)|}{|\zeta-z|^{\frac{1}{2}}V(\zeta)^{\frac{1}{2}}}\cdot \frac{V(\zeta)^{\frac{1}{2}}}{|\zeta-z|^{\frac{1}{2}}}dv_{\zeta}\right|^2V(z)dv_z\\
     \le &\int_{D_1}\int_{D_1}\frac{|f(\zeta)|^2}{|\zeta-z|V(\zeta)}dv_\zeta\cdot \int_{D_1}\frac{V(\zeta)}{|\zeta-z|}dv_{\zeta}V(z)dv_z\\
     \le &C_p\|V\|_{L^p(D_1)}\int_{D_1}\int_{D_1}\frac{|f(\zeta)|^2}{|\zeta-z|V(\zeta)}dv_\zeta V(z)dv_z\\
     = & C_p\|V\|_{L^p(D_1)}\int_{D_1}\frac{|f(\zeta)|^2}{V(\zeta)}\int_{D_1}\frac{V(z)}{|\zeta-z|}dv_z dv_\zeta\\
     \le & C^2_p\|V\|^2_{L^p(D_1)}\|f\|^2_{L_{V^{-1}}^2(D_1)}.
     \end{split}
 \end{equation*}

Clearly the above approach no longer works when, for instance, the potential $V$ is exactly $L^2$ (namely, in $L^2$ but not in $L^p$ for any $p>2$) as in the following example. For those cases, we have to use Theorem \ref{main} to justify the nonexistence of nontrivial flat  solutions.


\begin{example}\label{ex3}
For  $0<\epsilon <\frac{1}{2}$, let 
$$u_0(z)=e^{-\left(-\ln |z|\right)^\epsilon} \ \ \text{and}\ \  V(z) = 
\frac{\epsilon}{2|z|\left(-\ln |z|\right)^{1-\epsilon}}$$
on  $D_{\frac{1}{2}}$.  Then   $V\in L^2(D_{\frac{1}{2}})$, yet $V\notin L^p(D_{\frac{1}{2}})$  for any $p>2$.  By Theorem \ref{main}, the equation $|\bar\partial u|=V|u|$ has no nontrivial flat solution. Since $u_0$ is continuous on $D_{\frac{1}{2}}$ and satisfies $|\bar\partial u|=V|u|$ in $D_{\frac{1}{2}}\setminus \{0\}$ pointwisely,  by Lemma \ref{ele1} we have $u_0\in H^1(D_\frac{1}{2})$ and   satisfies $|\bar\partial u|=V|u|$  on $D_{\frac{1}{2}}$. Thus $u_0$ can not be flat in particular. On the other hand, one can directly verify that $u_0^{-1}(0) = \{0\}$, yet  $u_0$  fails to vanish to infinite order (in fact, \eqref{flat} fails  for all $m\ge 3$) at $0$.
\end{example}

When $V\in L^p, p<2$,  the strong unique continuation property fails in general as seen below. 

\begin{example}\label{ex}
For each $0<p<2$, choose $\epsilon \in (0, \frac{2-p}{p})$ (so that $(\epsilon+1)p<2$). Then $u_\epsilon(z): = e^{-\frac{1}{|z|^\epsilon}}$ vanishes to infinite order at $0$ and satisfies $ |\bar\partial u| = V|u|$  with
$V = \frac{\epsilon}{2|z|^{\epsilon+1}}\in L^p(D_1)$ on $\mathbb C$. 
\end{example}

More strikingly, Mandache  constructed an example in \cite{Ma02}  with an $L^p, p<2$ potential, where even the weak unique continuation property fails.  

\begin{example}\label{ex4} \cite{Ma02}
  There exist two functions $u\in C_c^\infty(\mathbb C)$ and $ V\in L^p(\mathbb C), 0<p<2,$ such that $u$ is supported on $D_1$ and satisfies $ \bar\partial u =  Vu$.
\end{example}

Despite the above examples in $p<2$,   the first author showed (in Lemma 7,  \cite{Pa92}) that  the strong unique continuation property may still be expected if the target dimension $N=1$ and $V$ takes certain special form, such as a multiple of $\frac{1}{|z|}$. Note that $\frac{1}{|z|}\notin L^2$. 
It should be noted that the strong unique continuation property  no longer holds for this potential  when  $N=2$. In fact,  Wolff and the first author in \cite{PW98} constructed a smooth function $v_0: \mathbb C\rightarrow \mathbb C$ which vanishes to infinite order at $0$ and  satisfies $|\triangle v|\le \frac{C}{|z|}|\nabla v|$ for some constant $C>0$. (See also \cite{AB94} by Alinhac and Baouendi for another example in the same setting.) Letting  $u_0: = (\Re \partial v_0, \Im \partial v_0)$, then $u_0: \mathbb C\rightarrow \mathbb C^2$   vanishes to infinite order at $0$ and  satisfies $|\bar\partial u|\le \frac{C}{|z|}| u|$ for some constant $C$.   
\medskip

\section{Proof of Theorem \ref{main2} and Corollary \ref{main3}}

We shall assume $n\ge 2$ in this section. Note that the inequality \eqref{eqn} reads as \begin{equation}\label{va}
     |\bar\partial u|: =\left(\sum_{j=1}^n\sum_{k=1}^N |\bar\partial_j u_k|^2\right)^{\frac{1}{2}}\le V \left(\sum_{k=1}^N|u_k|^2 \right)^{\frac{1}{2}}: = V|u|. 
\end{equation}
\medskip

\begin{proof}[Proof of Theorem \ref{main2}:] Without loss of generality,  assume  $n=2$. We only need to show that for any  $r_2\ge r_1>0, s>0$, if $u$ satisfies \eqref{eqn} on the bidisk $D_{r_2}\times D_{s}$ and $ u= 0$ on $D_{r_1}\times D_{s}$, then $u =0 $ on $D_{r_2}\times D_{s}$. 

Since $V\in L_{loc}^2(D_{r_2}\times D_{s} )$, by Fubini's theorem, for almost every $z_2\in D_s$, $V(\cdot, z_2)\in L^2_{loc}(D_{r_2})$, and similarly for $u(\cdot, z_2)\in H^1_{loc}(D_{r_2})$. Restricting \eqref{va} at each such $z_2= c_2$, we have  $v: = u(\cdot, c_2)$ on $D_{r_2}$ satisfies
\begin{equation*}
    |\bar\partial v| =\left(\sum_{k=1}^N|\bar\partial_1 u_k(\cdot, c_2)|^2\right)^{\frac{1}{2}}\le |\bar\partial u(\cdot, c_2)| \le V(\cdot, c_2)|u(\cdot, c_2)| =V(\cdot, c_2)|v|.
\end{equation*}
Since  $v= 0 $ on $D_{r_1}$, applying Theorem \ref{main} we have $v= 0 $ on $D_{r_2}$. Thus $u = 0$ on $D_{r_2}\times D_{s}$. The proof is complete. 

\end{proof}

\medskip

\begin{proof}[Proof of Corollary \ref{main3}: ]
Let $u: = f-g$. Then $u\in H^1_{loc}(\Omega)$ vanishes in an open subset and satisfies
$$ \bar\partial u = A u\ \ \text{on}\ \ \Omega.$$
Let $V:  =  \sqrt{\sum_{j,k=1}^N |A_{jk}|^2}$, the matrix norm of $A$. Then  $u$ satisfies $ |\bar\partial u|\le V|u| $ with $V\in L^2_{loc}(\Omega)$. By Theorem \ref{main2}, we have $u\equiv 0$ on $\Omega$. 

\end{proof}

\medskip
One may modify the one-dimensional Example \ref{ex3}  to obtain  an inequality  in higher dimensions with an exact $L^2$ 
potential, where Theorem \ref{main2} can be applied to get the weak continuation property. 

\begin{example}
 For $0<\epsilon <\frac{1}{2}$, let 
$$u_0(z_1, z_2) = e^{-\left(-\ln |z_1|\right)^\epsilon}e^{-\left(-\ln |z_2|\right)^\epsilon}\ \ \text{and}\  \ V(z_1, z_2) = \frac{\epsilon}{2}
\left(\frac{1}{|z_1|^2\left(-\ln |z_1|\right)^{2-2\epsilon}} + \frac{1}{|z_2|^2\left(-\ln |z_2|\right)^{2-2\epsilon}}\right)^\frac12$$
on  $D_{\frac{1}{2}}\times D_{\frac{1}{2}}$. Then $V\in L^2(D_{\frac{1}{2}}\times D_{\frac{1}{2}})$, yet $V\notin L^p(D_{\frac{1}{2}}\times D_{\frac{1}{2}})$  for any $p>2$. Moreover,  since $u_0$ is continuous on $D_{\frac{1}{2}}\times D_{\frac{1}{2}}$ and  satisfies $|\bar\partial u|= V|u|$ in $\left(D_{\frac{1}{2}}\setminus \{0\}\right)\times \left(D_{\frac{1}{2}}\setminus \{0\}\right)$,  $u_0\in H^1(D_\frac{1}{2}\times D_{\frac{1}{2}})$ and   satisfies $|\bar\partial u|= V|u|$ on  $D_{\frac{1}{2}}\times D_{\frac{1}{2}}$ by Corollary \ref{ele2}.  By Theorem \ref{main2}, any nontrivial solution, and thus $u_0$, can not vanish in open subsets of $D_{\frac{1}{2}}\times D_{\frac{1}{2}}$. 
In fact, one can directly verify that $u_0$ only vanishes on $\{z_1=0\}\cup\{z_2 =0\}$ in $D_{\frac{1}{2}}\times D_{\frac{1}{2}}$.  \ \
\end{example}

\medskip

  Similarly, making use of Mandache's example, one can construct an example in $n\ge 2$  where  the  weak unique continuation property fails if the potential is at most $L^p, p<2$.

\begin{example}\label{ex2}
 For each $0<p<2$, there exist nontrivial $u\in C_c^\infty(\mathbb C^2), V\in L^p(\mathbb C^2)$ such that $u$ is supported on $D_1\times D_1$ and satisfies $ |\bar\partial u|\le V|u|$ on $\mathbb C^2$.
\end{example}

\begin{proof}
Let $w, W$ be as in Mandache's Example \ref{ex4} with $w\in C_c^\infty(D_1), W\in L^p(\mathbb C)$ and $ |\bar\partial w|\le W|w|$ on $\mathbb C$. Define $u(z_1, z_2): =w(z_1)w(z_2), (z_1, z_2)\in \mathbb C^2$. Then $u\in C_c^\infty(\mathbb C^2)$ with support in $ D_1\times D_1$. One can check that for $(z_1, z_2)\in \mathbb C^2$, 
\begin{equation*}\begin{split}
    |\bar\partial  u(z_1, z_2)| \le & |\bar\partial_1 w(z_1)||w(z_2)|+ | w(z_1)||\bar\partial_2 w(z_2)|\\
    \le & W(z_1)|w(z_1)||w(z_2)| + W(z_2)|w(z_1)||w(z_2)|\\
    = & \left(W(z_1)\chi_{D_1}(z_2) +W(z_2)\chi_{D_1}(z_2)\right) |u(z_1, z_2)|.
\end{split}
    \end{equation*}
Letting $V(z_1, z_2): = W(z_1)\chi_{D_1}(z_2) +W(z_2)\chi_{D_1}(z_2)$. Then $ |\bar\partial u|\le V|u|$ with
$$\|V\|^p_{L^p(\mathbb C^2)} \le C\left (\int_{\mathbb C}|W(z_1)|^pdv_{z_1} + \int_{\mathbb C}|W(z_2)|^pdv_{z_2}\right)<\infty  $$ for a constant $C>0$.
\end{proof}

The following theorem gives an example where the strong unique continuation property may  hold when the potential $V$ 
is a multiple of $\frac{1}{|z|}$ as below. We note that when $n\ge 2$, $\frac{1}{|z|}$ is  in $L^2$ but not in $ L^{2n}$. Let $B_1$ be the unit ball centered at $0\in \mathbb C^n$.
\medskip

\begin{theorem}
Let $u$ be a smooth function from $B_1\subset \mathbb C^n$ to $\mathbb C$ satisfying
\begin{equation}
    |\bar\partial u|\le \frac{C}{|z|}|u|
\end{equation}
 for some constant $C>0$. If $u$ vanishes to infinite order at $0$, then $u$ vanishes identically on $B_1$. \end{theorem}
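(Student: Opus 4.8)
The plan is to reduce the statement to the one-dimensional scalar case by slicing along complex lines through the origin, in the same spirit as the proof of Theorem~\ref{main2}. For each unit vector $\lambda\in\mathbb{C}^n$ I would set $v_\lambda(w):=u(w\lambda)$, a smooth scalar function of $w\in D_1$. Since every point of $B_1\setminus\{0\}$ lies on a line $\{w\lambda:w\in\mathbb{C}\}$ with $|\lambda|=1$, it suffices to prove $v_\lambda\equiv 0$ for each such $\lambda$.

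First I would verify that each slice inherits the inequality with the Hardy potential $C/|w|$. By the chain rule $\bar\partial_w v_\lambda(w)=\sum_{j=1}^n\bar\partial_j u(w\lambda)\,\overline{\lambda_j}$, so the Cauchy-Schwarz inequality together with $|\lambda|=1$ gives, for $w\neq 0$,
\[
|\bar\partial_w v_\lambda(w)|\le|\bar\partial u(w\lambda)|\le\frac{C}{|w\lambda|}|u(w\lambda)|=\frac{C}{|w|}|v_\lambda(w)|.
\]
Hence $v_\lambda$ satisfies $|\bar\partial v_\lambda|\le\frac{C}{|w|}|v_\lambda|$ on $D_1$.

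Next I would transfer the flatness from $u$ to each slice. Since $u$ is smooth and vanishes to infinite order at $0$, all of its partial derivatives vanish at the origin; composing with the linear map $w\mapsto w\lambda$ shows that every derivative of $v_\lambda$ at $w=0$ is a finite combination of these, so $v_\lambda$ is flat at $0$ in the sense of \eqref{flat}. I would then invoke the one-dimensional scalar unique continuation property for the potential $C/|w|$ (Lemma~7 of \cite{Pa92}) to conclude $v_\lambda\equiv 0$. Letting $\lambda$ range over all unit vectors yields $u\equiv 0$ on $B_1$.

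The hard part is the one-dimensional scalar lemma itself, and it is essential that the target is scalar, $N=1$. Slicing turns the potential into $C/|w|$, which fails to lie in $L^2(\mathbb{C})$ in one variable: the extra integrability that placed $1/|z|$ in $L^2(\mathbb{C}^n)$ for $n\ge 2$ is destroyed upon restriction to a line, so neither Theorem~\ref{main} nor the weighted estimate of Theorem~\ref{whl} applies to the slices. Slicing alone thus cannot close the argument, and the delicate scalar statement carries the real weight; it genuinely fails once $N=2$, as the Pan-Wolff example \cite{PW98} recalled before the theorem shows.
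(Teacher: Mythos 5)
Your proposal is correct and takes essentially the same route as the paper: both arguments slice $u$ along complex lines through the origin, use the chain rule and Cauchy--Schwarz to show each slice satisfies the one-variable inequality with potential $C/|w|$, transfer the infinite-order vanishing to the slice, and then invoke the scalar one-dimensional result (Lemma 7 of \cite{Pa92}), which is indeed the step carrying the real weight since $C/|w|\notin L^2$ in one variable. The only difference is cosmetic: you normalize the direction to a unit vector $\lambda$ and work on $D_1$, while the paper parametrizes the line through an arbitrary $z\in B_1\setminus\{0\}$ over the disk $D_{1/|z|}$.
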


\begin{proof}
For each $z\in B_1\setminus\{0\}$, define $w(\tau): =u(\tau z)$, $\tau\in D_{\frac{1}{|z|}}$. Then $w$ vanishes to infinite order at $0$ and satisfies
$$|\bar\partial w(\tau)|\le |z||\bar\partial u(
\tau z)|\le \frac{C}{|\tau|} |w(\tau)|. $$ Making use of Lemma 7 in \cite{Pa92},  we have $w$ vanishes on $D_{\frac{1}{|z|}}$, and thus $u$ vanishes identically on $B_1$.

\end{proof}

\appendix
\section{Appendix}  

The following lemma is known in folklore, and is particularly useful while checking for weak derivatives in examples. For convenience of the reader, we provide a proof below. 
\medskip

\begin{lem}\label{ele1}
Let   $f: D_1\rightarrow \mathbb C$ and $f\in L^1(D_1)$. Suppose $u: D_1\rightarrow \mathbb C$ with $u\in L^2(D_1) $ satisfies 
$$ \bar\partial u  =f \ \ \text{in}\ \ D_1\setminus \{0\}$$
in weak sense. Then $$ \bar\partial u  =f \ \ \text{in}\ \ D_1$$ in weak sense. \end{lem}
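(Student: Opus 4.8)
The plan is to exploit that a point is removable for $\bar\partial$ tested against an $L^2$ function, by excising a shrinking neighborhood of the origin with a smooth cutoff and then controlling the resulting commutator term by the $L^2$ hypothesis. Recall that $\bar\partial u = f$ weakly on an open set $U$ means $\int_U u\,\bar\partial\phi\,dv = -\int_U f\phi\,dv$ for every $\phi\in C_c^\infty(U)$, where I write $\bar\partial\phi$ for $\partial\phi/\partial\bar z$. The hypothesis supplies this identity for all $\phi\in C_c^\infty(D_1\setminus\{0\})$, and the goal is to upgrade it to all $\phi\in C_c^\infty(D_1)$.

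First I would fix an arbitrary $\phi\in C_c^\infty(D_1)$ and a cutoff. Taking $\chi\in C^\infty(\mathbb C)$ with $\chi\equiv 0$ on $D_1$, $\chi\equiv 1$ off $D_2$, $0\le\chi\le1$ and $|\nabla\chi|\le 2$, I set $\chi_\epsilon(z):=\chi(z/\epsilon)$, so that $\chi_\epsilon$ vanishes on $D_\epsilon$, equals $1$ off $D_{2\epsilon}$, satisfies $|\nabla\chi_\epsilon|\le 2/\epsilon$, and has $\bar\partial\chi_\epsilon$ supported on the annulus $A_\epsilon:=D_{2\epsilon}\setminus D_\epsilon$. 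Then $\chi_\epsilon\phi\in C_c^\infty(D_1\setminus\{0\})$ is admissible, and the product rule $\bar\partial(\chi_\epsilon\phi)=(\bar\partial\chi_\epsilon)\phi+\chi_\epsilon\,\bar\partial\phi$ turns the hypothesis into
$$\int_{D_1} u\,(\bar\partial\chi_\epsilon)\phi\,dv + \int_{D_1} u\,\chi_\epsilon\,\bar\partial\phi\,dv = -\int_{D_1} f\,\chi_\epsilon\phi\,dv.$$
Next I would let $\epsilon\to0$ and dispatch the two routine terms by dominated convergence: since $\chi_\epsilon\to1$ pointwise off the origin while $|u\,\chi_\epsilon\,\bar\partial\phi|\le|u|\,\|\bar\partial\phi\|_\infty$ with $u\in L^2(D_1)\subset L^1_{loc}(D_1)$, the second integral tends to $\int u\,\bar\partial\phi\,dv$; likewise $|f\chi_\epsilon\phi|\le|f|\,\|\phi\|_\infty$ with $f\in L^1(D_1)$, so the right-hand side tends to $-\int f\phi\,dv$.

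The main obstacle, and the only place the $L^2$ assumption is used, is the commutator term $\int u\,(\bar\partial\chi_\epsilon)\phi\,dv$, which I claim vanishes in the limit. Since $\bar\partial\chi_\epsilon$ is supported on $A_\epsilon$ with $|\bar\partial\chi_\epsilon|\le 2/\epsilon$, the Cauchy--Schwarz inequality gives
$$\left|\int_{D_1} u\,(\bar\partial\chi_\epsilon)\phi\,dv\right| \le \frac{2}{\epsilon}\,\|\phi\|_\infty \int_{A_\epsilon}|u|\,dv \le \frac{2}{\epsilon}\,\|\phi\|_\infty\,|A_\epsilon|^{\frac12}\left(\int_{A_\epsilon}|u|^2\,dv\right)^{\frac12}.$$
Here the scaling is exactly balanced: $|A_\epsilon|=3\pi\epsilon^2$, so $\epsilon^{-1}|A_\epsilon|^{1/2}$ is a constant and the bound collapses to $C\|\phi\|_\infty\left(\int_{A_\epsilon}|u|^2\,dv\right)^{1/2}$, which tends to $0$ because $|u|^2\in L^1(D_1)$ and $|A_\epsilon|\to0$. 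Passing to the limit in the displayed identity then yields $\int u\,\bar\partial\phi\,dv=-\int f\phi\,dv$ for every $\phi\in C_c^\infty(D_1)$, which is precisely $\bar\partial u=f$ in the weak sense on all of $D_1$. I would stress that it is the real-codimension-two cancellation $\epsilon^{-1}\cdot\epsilon=O(1)$ that makes the plain linear-scale cutoff succeed under the $L^2$ hypothesis, whereas merely $L^1$ control of $u$ would not close the estimate.
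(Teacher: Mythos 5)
Your proof is correct and is essentially the paper's own argument in mirror image: the paper splits $\phi = \eta_k\phi + (1-\eta_k)\phi$ with $\eta_k$ supported near the origin, which is exactly your decomposition with $\chi_\epsilon = 1-\eta_k$, and both proofs kill the cutoff term via Cauchy--Schwarz using $u\in L^2$ together with the codimension-two cancellation $\epsilon^{-1}\cdot|A_\epsilon|^{1/2}=O(1)$. The only cosmetic difference is that the paper bounds the whole near-origin contribution (including $u\,\eta_k\,\bar\partial\phi$) by $C\|u\|_{L^2(D_{1/k})}$, while you handle that piece by dominated convergence instead; this changes nothing of substance.
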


\begin{proof}
 Let $\eta \in C_c^\infty(D_1)$ be a cut-off function with $\eta=1$ on $D_\frac{1}{2}$ and $|\nabla \eta|<3$ on $D_1$. For each $k>0$,  let $\eta_k(z) =  \eta(kz), z\in\mathbb C$.  Given a fixed testing function $\phi\in C_c^\infty(D_1)$,
 \begin{equation*}
     \begin{split}
         \int_{D_1}u(z)\bar\partial  \phi(z)dv_z = \int_{D_1} u(z)\bar\partial \left(\eta_k(z)\phi(z)\right)dv_z +\int_{D_1}  u(z)\bar\partial \left(\left(1-\eta_k(z)\right)\phi(z)\right)dv_z: = A+B.
     \end{split}
 \end{equation*}
Since $(1-\eta_k)\phi$ is a testing function on $D_1\setminus \{0\}$, by assumption
$$B = -\int_{D_1} f(z)\left(1-\eta_k(z)\right) \phi(z)dv_z \rightarrow -\int_{D_1} f(z) \phi(z)dv_z$$
when $k\rightarrow \infty$. For $A$, since $\eta_k$ and  $|\bar\partial \eta_k|$ are only supported on $D_{\frac{1}{k}}$ with $|\eta_k|\le 1$ and  $|\bar\partial \eta_k|\le 3k$, we have by H\"older inequality
\begin{equation*}
    \begin{split}
        |A|\le &\int_{D_1} |u(z)||\bar\partial \eta_k(z)||\phi(z)|dv_z + \int_{D_1} |u(z)||\eta_k(z)||\bar\partial \phi(z)|dv_z\\
        \le & 3k\int_{D_{\frac{1}{k}}} |u(z)\phi(z)|dv_z + \int_{D_{\frac{1}{k}}} |u(z)\bar\partial \phi(z)|dv_z\\
        \le &3k\frac{\sqrt{\pi}}{k}\|u\phi\|_{L^2(D_{\frac{1}{k}})} + \frac{\sqrt{\pi}}{k}\|u\bar\partial\phi\|_{L^2(D_{\frac{1}{k}})}\\
        \le& C\|u\|_{L^2(D_{\frac{1}{k}})}
    \end{split}
\end{equation*}
 for some constant $C>0$ (dependent only on $\phi$). Letting $k\rightarrow \infty$, we get $|A|\rightarrow 0$. The proof is complete.
 
\end{proof}
\medskip

We remark that  the assumption $u \in L^2(D_1)$ in Lemma \ref{ele1} can not be further relaxed, as indicated by the following example. 

\begin{example}
Let $u_0(z) = \frac{1}{z}, z\in D_1$. Then $u_0$ satisfies $$ \bar\partial u  = 0\ \ \text{in}\ \ D_1\setminus \{0\}.$$
However, $u_0$ is not a weak solution to $
    \bar\partial u  = 0\ \ \text{in}\ \ D_1.$
\end{example}

\begin{proof}
We only need to verify for a general $\phi\in C_c^\infty(D_1)$, $\int_{D_1}\frac{\bar\partial \phi(z)}{z} dv_z \ne 0. $ Indeed, by Stokes' theorem, 
\begin{equation*}\begin{split}
     \int_{D_1}\frac{\bar\partial \phi(z)}{z} dv_z = &\lim_{\epsilon\rightarrow 0}  \int_{D_1\setminus D_\epsilon}\frac{\bar\partial \phi(z)}{z} dv_z \\
    = &\frac{1}{2}\lim_{\epsilon\rightarrow 0}  \int_{D_1\setminus D_\epsilon}\bar\partial\left( \frac{ \phi(z)}{z}dz\right) \\
     =& -\frac{1}{2}\lim_{\epsilon\rightarrow 0}  \int_{\partial D_\epsilon} \frac{ \phi(z)}{z} dz\\
     =& -\phi(0)\pi i.
     \end{split}
   \end{equation*}
\end{proof}

One can immediately extend Lemma \ref{ele1} to higher dimensions as follows. 
\medskip

\begin{cor}\label{ele2}
Let   $f_j : D_1\rightarrow \mathbb C$ and $f_j\in L^1(D_1)$, $j=1, 2$. Suppose  $u_j: D_1\rightarrow \mathbb C$ with $u_j\in L^2(D_1) $ and satisfies 
$$ \bar\partial u_j  =f_j \ \ \text{in}\ \ D_1\setminus \{0\}$$
in weak sense for each $j$. Then $u(z_1, z_2): =u_1(z_1)u_2(z_2)$ satisfies $$ \bar\partial_1 u(z_1, z_2)  =f_1(z_1)u_2(z_2) \ \  \text{and} \ \ \bar\partial_2 u (z_1, z_2) =u_1(z_1)f_2(z_2) \ \ \text{in}\ \ D_1\times D_1$$ in weak sense.
\end{cor}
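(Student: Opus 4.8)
The plan is to reduce the two-variable statement to the single-variable Lemma \ref{ele1} by freezing one variable at a time and invoking Fubini's theorem. The essential preliminary observation is that Lemma \ref{ele1} does more than it appears: since $u_j\in L^2(D_1)$, $f_j\in L^1(D_1)$, and $\bar\partial u_j=f_j$ holds weakly on $D_1\setminus\{0\}$, the lemma upgrades this to $\bar\partial u_j=f_j$ weakly on \emph{all} of $D_1$. Thus for every $\varphi\in C_c^\infty(D_1)$,
$$\int_{D_1}u_j(z)\,\bar\partial\varphi(z)\,dv_z=-\int_{D_1}f_j(z)\,\varphi(z)\,dv_z.$$
This upgrade is precisely what lets me later apply the identity to a sliced test function that need not vanish near the origin.

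Next I would verify the first identity; the second follows by symmetry. Fix $\phi\in C_c^\infty(D_1\times D_1)$. By Fubini's theorem,
$$\int_{D_1\times D_1}u_1(z_1)u_2(z_2)\,\bar\partial_1\phi(z_1,z_2)\,dv_{z_1}dv_{z_2}=\int_{D_1}u_2(z_2)\left(\int_{D_1}u_1(z_1)\,\bar\partial_1\phi(z_1,z_2)\,dv_{z_1}\right)dv_{z_2}.$$
For each fixed $z_2$, the slice $\phi(\cdot,z_2)$ belongs to $C_c^\infty(D_1)$, so the inner integral equals $-\int_{D_1}f_1(z_1)\phi(z_1,z_2)\,dv_{z_1}$ by the upgraded identity. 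Substituting this and applying Fubini once more produces $-\int_{D_1\times D_1}f_1(z_1)u_2(z_2)\phi(z_1,z_2)\,dv_{z_1}dv_{z_2}$, which is exactly the weak form of $\bar\partial_1 u=f_1(z_1)u_2(z_2)$.

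The only point demanding care is the legitimacy of the two applications of Fubini, and this is the mildest of obstacles. Since $D_1$ has finite measure, $u_2\in L^2(D_1)\subset L^1(D_1)$ and $f_1\in L^1(D_1)$, so both $u_1(z_1)u_2(z_2)$ and $f_1(z_1)u_2(z_2)$ are products of $L^1$ functions in separate variables and hence lie in $L^1(D_1\times D_1)$ by Tonelli; multiplication by the bounded, compactly supported $\phi$ preserves integrability. This justifies both the interchange of integration order and the slicing of the test function. No genuine difficulty arises beyond this bookkeeping, and interchanging the roles of $z_1$ and $z_2$ delivers the second identity.
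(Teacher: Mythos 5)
Your proposal is correct and follows essentially the same route as the paper's own proof: slice the test function $\phi(\cdot,z_2)\in C_c^\infty(D_1)$, apply Lemma \ref{ele1} (in its upgraded form on all of $D_1$) to $u_1$ for each fixed $z_2$, and use Fubini's theorem on both sides. The only difference is that you spell out the integrability bookkeeping ($u_j\in L^2\subset L^1$ on the finite-measure disk, Tonelli for the product) that the paper leaves implicit, which is a harmless and welcome addition.
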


\begin{proof}
We  only prove  the first equation  holds in $D_1\times D_1$ in weak sense. For a fixed testing  function $\phi\in C_c^\infty(D_1\times D_1)$, by Fubini's theorem, 
\begin{equation*}
     \int_{D_1\times D_1}u(z_1, z_2)\bar\partial_1 ( \phi(z_1, z_2))dv_z = \int_{D_1}u_2(z_2)\int_{D_1} u_1(z_1)\bar\partial_1 \left(\phi(z_1, z_2)\right)dv_{z_1}dv_{z_2}. 
      \end{equation*}
Since $\phi(\cdot, z_2)\in C_c^\infty(D_1)$  for each $z_2\in D_1$, applying Lemma \ref{ele1}  to $u_1$, we get 
\begin{equation*}
    \begin{split}
       \int_{D_1\times D_1}u(z_1, z_2)\bar\partial_1 ( \phi(z_1, z_2))dv_z =& - \int_{D_1}u_2(z_2)\int_{D_1} f_1(z_1)\phi(z_1, z_2)dv_{z_1}dv_{z_2}\\
       =&  -\int_{D_1\times D_1} f_1(z_1)u_2(z_2) \phi(z_1, z_2)dv_z.
    \end{split}
\end{equation*}

\end{proof}

\bibliographystyle{alphaspecial}

\begin{thebibliography}{99}

 \fontsize{9.5pt}{9pt} \selectfont

\setlength{\parskip}{1ex}

\bibitem{AB94} {\sc S. Alinhac and S. Baouendi}: \emph{ A counterexample to strong uniqueness for partial differential equations of Schrödinger's type.} Comm. Partial Differential Equations {\bf 19} (1994), no. 9-10, 1727--1733. 

\bibitem{BL90} {\sc S. Bell and L. Lempert}: \emph{
A $C^\infty$ Schwarz reflection principle in one and several complex variables.}
J. Differential Geom. {\bf 32} (1990), no. 3, 899--915.

\bibitem{CS90} {\sc S. Chanillo and E. Sawyer}: \emph{ Unique continuation for $\triangle +v$ and the C. Fefferman-Phong class.} Trans. Amer. Math. Soc. {\bf 318} (1990), no. 1, 275--300.

\bibitem{CW85} {\sc S. Chanillo and R.  Wheeden}: \emph{$L^p$-estimates for fractional integrals and Sobolev inequalities with applications to Schr\"odinger operators.} Comm. Partial Differential Equations {\bf 10} (1985), no. 9, 1077--1116.


\bibitem{LPZ} {\sc Y. Liu, Y. Pan and Y. Zhang}: \emph{ Derivative formula for the solid Cauchy integral operator and its applications.} Preprint.

\bibitem{Ma02} {\sc N. Mandache}: \emph{A counterexample to unique continuation in dimension two.} 
Comm. Anal. Geom. {\bf 10} (2002), no. 1, 1--10.


\bibitem{Pa92} {\sc Y. Pan}, \emph{ Unique continuation for Schrödinger operators with singular potentials.} Comm. Partial Differential Equations {\bf 17} (1992), no. 5-6, 953--965. 


\bibitem{PW98} {\sc Y. Pan and T. Wolff}: \emph{A remark on unique continuation.}
J. Geom. Anal. {\bf 8} (1998), no. 4, 599--604.

\bibitem{Sa84} {\sc E. Sawyer}: \emph{Unique continuation for Schrödinger operators in dimension three or less.} Ann. Inst. Fourier (Grenoble) {\bf 34} (1984), no. 3, 189--200. 

\bibitem{Ve} {\sc  I. N. Vekua}: {\emph Generalized analytic functions}, International Series of Monographs on Pure and Applied Mathematics, Vol. 25, Pergamon Press, 1962 xxix+668 pp.

\bibitem{Wo94} {\sc T. Wolff}:   {\emph
A counterexample in a unique continuation problem.}
Comm. Anal. Geom. {\bf 2} (1994), no. 1, 79--102.

\end{thebibliography}

\fontsize{11}{11}\selectfont

\noindent pan@pfw.edu,

\vspace{0.2 cm}

\noindent Department of Mathematical Sciences, Purdue University Fort Wayne, Fort Wayne, IN 46805-1499, USA.\\
\vspace{0.2cm}

\noindent zhangyu@pfw.edu,

\vspace{0.2 cm}

\noindent Department of Mathematical Sciences, Purdue University Fort Wayne, Fort Wayne, IN 46805-1499, USA.\\
\end{document}